\theoremstyle{plain}
\theoremstyle{plain}
\newtheorem{theorem}{Theorem}
\newtheorem*{Thm}{Theorem}
\newtheorem{corollary}[theorem]{Corollary}
\newtheorem{lemma}[theorem]{Lemma}
\newtheorem{proposition}[theorem]{Proposition}
\theoremstyle{definition}
\theoremstyle{remark}
\DeclareMathOperator{\Hess}{Hess}
\begin{document}

\title[A gap theorem]{A gap theorem for free boundary minimal surfaces in the three-ball}

\author[Lucas Ambrozio]{Lucas Ambrozio}
\address{Departament of Mathematics, Imperial College London, South Kensington Campus, London, UK.}
\email{l.ambrozio@imperial.ac.uk}

\author[Ivaldo Nunes]{Ivaldo Nunes}
\address{Departamento de Matem\'atica, Universidade Federal do Maranh\~ao, Campus do Bacanga, S\~ao Lu\'is, MA, Brasil.}
\address{Departament of Mathematics, Imperial College London, South Kensington Campus, London, UK. (\textit{Current address})}
\email{ivaldo.nunes@ufma.br}

\date{}
\begin{abstract}
We show that, among free boundary minimal surfaces in the unit ball in the three-dimensional Euclidean space, the flat equatorial disk and the critical catenoid are characterised by a pinching condition on the length of their second fundamental form.
\end{abstract}
\maketitle
\section{Introduction}

\indent Let $B^3$ denote the unit ball in $\mathbb{R}^3$, centred at the origin. The critical points of the area functional on compact surfaces in $B^3$ whose boundaries are free to move in $\partial B$ are minimal surfaces that intersect $\partial B$ orthogonally. \\
\indent The simplest examples of free boundary minimal surfaces in $B^3$ are the flat equatorial disk and the critical catenoid, which will be described in details below. Fraser and Schoen \cite{FraSch1} constructed embedded examples of genus zero with any number of boundary components by finding metrics on surfaces of this topological type that maximize the first Steklov eigenvalue with fixed boundary length. The gluing construction of Pacard, Folha and Zolotareva \cite{PacFolZol} yielded free boundary minimal surfaces in $B^3$ of genus zero or one, and with any number of boundary components greater than a large constant. The existence of examples with three boundary components and arbitrarily large genus has been announced by M. Li and Kapouleas. Although expected, it is not clear yet whether a free boundary minimal surface in $B^3$ can be of arbitrary genus and have any number of boundary components. \\
\indent There are classifications theorems that single out the flat equatorial disk and the critical catenoid from among free boundary minimal surfaces in $B^3$. Some of them will be reviewed in Section 2. In this work, we sought for a new characterisation of the flat equatorial disk and the critical catenoid in terms of the length of their second fundamental forms. It turns out that a pinching condition on this quantity, that takes into account the support function of the surface, is sufficient. The result we obtained is the following:

\begin{Thm}\label{maintheorem}
Let $\Sigma^2$ be a compact free boundary minimal surface in $B^{3}$. Assume that for all points $x$ in $\Sigma$,
\begin{equation} \label{pinch}
 |A|^2(x)\langle x,N(x)\rangle^2\leqslant 2,
\end{equation}
\noindent where $N(x)$ denotes a unit normal vector at the point $x\in \Sigma$ and $A$ denotes the second fundamental form of $\Sigma$. Then
\begin{itemize}
\item[$i)$] either $|A|^2\langle x,N\rangle^2\equiv 0$ and $\Sigma$ is a flat equatorial disk;
\item[$ii)$] or $|A|^2(p)\langle p,N(p)\rangle^2=2$ at some point $p\in\Sigma$ and $\Sigma$ is a critical catenoid.
\end{itemize}
\end{Thm}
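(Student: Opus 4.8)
The plan is to encode the free boundary condition into the \emph{support function} $u=\langle x,N\rangle$ and to exploit the fact that, for a minimal surface, $u$ is a Jacobi field. First I would record the two basic facts about $u$. Since the coordinate functions of a minimal surface are harmonic and $\Delta N=-|A|^2N$, a direct computation gives the interior equation
\[
\Delta u+|A|^2u=0 \quad\text{on }\Sigma.
\]
On the other hand, the free boundary condition means that along $\partial\Sigma$ the position vector $x$ is normal to $\partial B$, hence tangent to $\Sigma$, so that $N\perp x$ and therefore
\[
u=\langle x,N\rangle=0 \quad\text{on }\partial\Sigma.
\]
Thus $u$ solves a Dirichlet problem for the Jacobi operator. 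I would also note the Gauss equation $K=-\tfrac12|A|^2\le 0$, and that if $u\equiv0$ then $\Sigma$ is dilation invariant, i.e.\ a minimal cone, hence a plane; combined with the free boundary condition this already yields case $i)$, the equatorial disk. So from now on I assume $A\not\equiv0$, and it then follows that $u\not\equiv0$ and $\phi:=|A|^2u^2\not\equiv0$.

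The heart of the argument is a conformal change. Away from the (isolated) zeros of $A$, I would introduce the metric $\hat g=\tfrac12|A|^2\,g$, which is exactly the pullback $N^*g_{S^2}$ of the round metric by the Gauss map. Using the Simons-type identity $\Delta\log|A|=2K=-|A|^2$ for minimal surfaces in $\mathbb{R}^3$, the law for the conformal change of the Gauss curvature gives $\hat K\equiv 1$, so $(\Sigma,\hat g)$ is a constant curvature one surface (with possible cone points at the zeros of $A$, and with boundary along $\{u=0\}$). Since $\Delta_{\hat g}=\tfrac{2}{|A|^2}\Delta$, the interior equation for $u$ becomes
\[
\Delta_{\hat g}u=-2u,
\]
that is, $u$ is an eigenfunction for the eigenvalue $2$, the first nonzero eigenvalue of the round $S^2$ and precisely the value at which Obata rigidity occurs. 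In this language the pinching condition \eqref{pinch} reads simply $\tfrac12|A|^2u^2\le 1$, i.e.\ $\phi\le2$, and case $ii)$ asks for $\phi$ to attain the value $2$.

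With this structure in place, the remaining task is a gap-and-rigidity statement: a non-flat $\Sigma$ with $\phi\le2$ must have $\max_\Sigma\phi=2$, and equality forces the critical catenoid. Since $\phi\ge0$ and $\phi=0$ on $\partial\Sigma$, the function $\phi$ attains a positive interior maximum. I expect the main obstacle to be precisely here: a naive maximum principle is inconclusive, because at an interior maximum one computes, using $\nabla\phi=0$ and the equation for $u$,
\[
\Delta\phi=-2|A|^2|\nabla u|^2-4|A|^4u^2\le0,
\]
which is automatically satisfied and yields no bound on the maximal value. Closing this gap is the crux, and I would attack it through the constant curvature structure: $(\Sigma,\hat g,u)$ is a curvature one surface carrying a Dirichlet eigenfunction for the Obata eigenvalue $2$, while the pinching is an extrinsic constraint relating the conformal factor to $u$. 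Combining $\Delta_{\hat g}u=-2u$ with the companion equation $\Delta_{\hat g}\log|A|=-2$ for the conformal factor, I would aim to show that the saturation $\phi=2$ must occur, and that at a saturation point the full Obata-type identity forces $|\nabla u|$ and $|\nabla|A||$ to vanish there.

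Finally, for the equality case I would exploit this rigidity to conclude that $\Sigma$ is rotationally symmetric: the coincidence of the critical points of $u$ and of $|A|$, together with the constant curvature structure, should force the metric to be a warped product and $u,|A|$ to depend on a single variable, reducing the problem to an ordinary differential equation. The unique non-flat solution of this ODE compatible with the free boundary condition is the catenoid profile, and the classification results recalled in Section 2 then identify $\Sigma$ with the critical catenoid, giving case $ii)$. The delicate points I anticipate are the control of the cone points of $\hat g$ coming from the zeros of $A$ and the passage from the pointwise pinching to rotational symmetry; these are where the argument must do the real work.
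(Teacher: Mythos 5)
Your setup is correct as far as it goes: $u=\langle x,N\rangle$ is indeed a Jacobi field vanishing on $\partial\Sigma$, the Gauss-map metric $\hat g=\tfrac12|A|^2g$ has curvature $1$ away from the zeros of $A$, and in this picture the pinching reads $\phi=|A|^2u^2\le 2$ with $\Delta_{\hat g}u=-2u$. But the proof stops exactly where the theorem begins. The two assertions that constitute the theorem --- (a) a non-flat $\Sigma$ satisfying the pinching must attain the value $2$, and (b) attainment forces the critical catenoid --- are both deferred: ``I would aim to show that the saturation $\phi=2$ must occur,'' and the warped-product/ODE reduction ``should force'' rotational symmetry. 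Neither is an argument, and the mechanism you propose for closing the gap is doubtful on its face: Obata rigidity at the eigenvalue $2$ is a statement about closed manifolds, whereas your $(\Sigma,\hat g)$ has boundary, possible cone points at the zeros of $A$, and $u$ is a Dirichlet-type eigenfunction. On the round hemisphere, for instance, the Dirichlet eigenvalue $2$ is attained by restrictions of linear functions with no rigidity whatsoever, so the mere existence of such an eigenfunction cannot single anything out; whatever rigidity you hope to extract must use the pinching in an essential way, and that is precisely the step that is absent. (Incidentally, your displayed formula for $\Delta\phi$ is not correct --- it omits the $\Delta|A|^2$ and cross terms and has a sign error in the gradient term --- though this is harmless since you only use it to note that the naive maximum principle is inconclusive.)

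For comparison, the paper's key observation is elementary and entirely different: computing $\Hess_\Sigma f$ for $f(x)=|x|^2/2$, whose eigenvalues are $1\pm\tfrac{|A|}{\sqrt2}\langle x,N\rangle$, one sees that the pinching (\ref{pinch}) is \emph{equivalent} to convexity of $f$ on $\Sigma$. Since the free boundary condition makes $\partial\Sigma$ strictly convex ($k_g\equiv 1$) and forces $\nabla^\Sigma f$ to point outward along $\partial\Sigma$, the minimum set $\mathcal{C}$ of $f$ is a totally convex subset of the interior: either a single point, in which case $\pi_1(\Sigma)$ is trivial and Nitsche's theorem gives the flat equatorial disk, or a simple closed geodesic, in which case $\Sigma$ is an annulus. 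If the value $2$ is attained somewhere, $\Sigma$ cannot be a disk, so $\mathcal{C}$ is a closed geodesic lying tangentially on a concentric sphere $S_R$, hence a great circle of $S_R$; the normal speed $\langle x\wedge N,e_3\rangle$ of the rotation about its axis is then a Jacobi field vanishing together with its gradient along $\mathcal{C}$, so Cheng's nodal-set theorem forces it to vanish identically, i.e.\ $\Sigma$ is rotationally symmetric and therefore the critical catenoid. If you wish to salvage your approach, what you must supply is a replacement for this convexity-plus-nodal-set argument; nothing in the conformal reformulation currently does that work.
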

\indent The result above bears some resemblance with the following theorem, which answered a question raised by the work of Simons \cite{Sim}:
\begin{Thm}[Chern-do Carmo-Kobayashi \cite{ChedoCKob}, Lawson \cite{Law}]
 Let $\Sigma^n$ be a closed minimal hypersurface in the unit sphere $S^{n+1}$. Assume that 
 \begin{equation*}
  |A|^2\leqslant n.
 \end{equation*}
 Then 
 \begin{itemize}
 \item[$i)$] either $|A|^2\equiv 0$ and $\Sigma^n$ is an equator;
 \item[$ii)$] or $|A|^2\equiv n$ and $\Sigma^n$ is a Clifford hypersurface (product of two round spheres of appropriate radii and dimensions).
 \end{itemize}
\end{Thm}  
\indent However, the proof of our theorem is quite different and some steps are true only in low dimensions. It would be interesting to know if our result can be generalised to higher ambient dimension and submanifold co-dimensions, providing a full analogy between the results of \cite{ChedoCKob} and this free boundary setting.

\section{The flat equatorial disk and the critical catenoid}

\indent The flat equatorial disks are defined by the intersection of the unit ball $B^3$ with a plane containing the origin. \\
\indent The flat equatorial disks are the only totally geodesic free boundary minimal surfaces in $B^3$. Nitsche \cite{Nit} has shown that they are in fact the only immersed free boundary minimal surfaces in $B^3$ that are homeomorphic to a disk. An interesting generalisation of his result to higher codimensions was obtained by Fraser and Schoen in \cite{FraSch2}. In a different direction, it is known that the flat equatorial disk has the least possible area among free boundary minimal surfaces in $B^3$ (see for example \cite{RosVer}, Proposition 3, or \cite{FraSch3}, Theorem 5.4). This was generalised by Brendle \cite{Bre}. Finally, flat equatorial disks are the only free boundary minimal surfaces with Morse index equal to one (a more general statement can be found in \cite{FraSch1}, Theorem 3.1). \\
\indent Critical catenoids, on the other hand, are the only non-flat free boundary minimal surfaces in $B^3$ that are invariant under rotations around a given axis. They can be analytically defined as the image of the conformal maps
\begin{multline*}
 X : (t,\theta) \in [-t_0,t_0]\times S^1 \\
 \mapsto a_0\cosh(t)\cos(\theta)e_1+a_0\cosh(t)\sin(\theta)e_2 + a_0t e_3 \in \mathbb{R}^3
\end{multline*}
\noindent where $\{e_1,e_2,e_3\}$ is some orthonormal basis of $\mathbb{R}^3$. The constant $t_0$ is the unique positive solution to the equation $t\sinh(t)=\cosh(t)$, while $a_0=(t_0\cosh(t_0))^{-1}$. These constants are chosen in such way that the map $X$ is conformal and its image is a piece of a catenoid contained in $B^3$, meeting $\partial B$ orthogonally, and whose axis of symmetry is the line generated by the vector $e_3$. \\
\indent It has been conjectured that the critical catenoid is the only embedded free boundary minimal annulus in $B^3$ (for example, in \cite{FraLi}, Conjecture 1.1). McGrath \cite{Mcg} showed that this conjecture is true under the additional assumption that the surface is symmetric with respect to reflections through three mutually orthogonal planes. Fraser and Schoen \cite{FraSch1} proved that the critical catenoid is the only free boundary minimal annulus that is immersed in $B^3$ by its first Steklov eigenfunctions. It would be also very interesting to classify the critical catenoid only by its Morse index. \\ 
\indent Regarding our main result, we observe that, in the parametrization $X$ given above, the second fundamental form and the support function are given by
\begin{equation*}
 |A|^2 = \frac{2}{a_0^2\cosh^4(t)} \quad \text{and} \quad \langle x,N\rangle^2 = a_0^2\left(1 - \frac{t\sinh(t)}{\cosh(t)}\right)^2.
\end{equation*}
\indent In particular, since $|t|\leq t_0$,
\begin{equation*}
 |A|^2\langle x,N\rangle^2 = \frac{2}{\cosh^6(t)}\left(\cosh(t) - t\sinh(t)\right)^2 \leqslant 2.
\end{equation*}
\noindent Notice that the maximum value $2$ is attained at $t=0$, that is, on the circle defined by the intersection of the given critical catenoid and the plane through the origin that is orthogonal to its axis of symmetry.

\section{Proof of the Theorem}

\indent In this section, we explain the proof of our main result. The key observation is to relate the pinching condition (\ref{pinch}) to a condition on the Hessian of the distance function of points on $\Sigma$ to the origin.
\begin{lemma}\label{distancefunction}
Let $\Sigma^2$ be a free boundary minimal surface in $B^3$. Let $f$ be the function defined by 
\begin{equation*}
f(x)=\frac{|x|^2}{2}, \quad x\in\Sigma.
\end{equation*}
Then,
\begin{itemize}
\item[$i)$] $\nabla^\Sigma f(x)=x$ for all $x\in\partial\Sigma$.
\item[$ii)$] For each $x\in\Sigma$, the eigenvalues of $\Hess_\Sigma f(x)$ are given by 
$$
1-\frac{|A|(x)}{\sqrt{2}}\langle x,N(x)\rangle \ \ \mbox{and} \ \ 1 +\frac{|A|(x)}{\sqrt{2}}\langle x,N(x)\rangle .
$$
\end{itemize}
\end{lemma}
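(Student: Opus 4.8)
The plan is to compute both quantities by direct differentiation, using the ambient structure of $\mathbb{R}^3$ together with the minimality and free boundary conditions. Writing $f(x) = \tfrac{1}{2}\langle x, x\rangle$, the ambient gradient is simply $\nabla^{\mathbb{R}^3} f(x) = x$, so the intrinsic gradient is the tangential projection $\nabla^\Sigma f(x) = x^\top = x - \langle x, N\rangle N$. To prove part $i)$, I would invoke the free boundary condition: along $\partial\Sigma$, the position vector $x$ is orthogonal to $\partial B$, hence parallel to $N$ at those points (the surface meets the sphere orthogonally). This forces $\langle x, N\rangle = 0$ on $\partial\Sigma$, so the normal component vanishes and $\nabla^\Sigma f(x) = x$ there, as claimed.

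For part $ii)$, the plan is to differentiate the gradient field. For tangent vectors $X, Y$ on $\Sigma$, I would use the formula
\begin{equation*}
\Hess_\Sigma f(X,Y) = X\langle \nabla^\Sigma f, Y\rangle - \langle \nabla^\Sigma f, \nabla^\Sigma_X Y\rangle.
\end{equation*}
Starting from $\nabla^\Sigma f = x^\top$ and differentiating $\langle x - \langle x,N\rangle N, Y\rangle$, the term $X\langle x, Y\rangle$ produces $\langle X, Y\rangle$ (since the ambient derivative of the position vector is the identity) plus a term involving $\nabla_X Y$ which splits into tangential and normal parts via the Gauss formula $\overline{\nabla}_X Y = \nabla^\Sigma_X Y + A(X,Y)N$. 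Carefully collecting these, the tangential derivatives cancel against $\langle \nabla^\Sigma f, \nabla^\Sigma_X Y\rangle$, and the surviving contribution is
\begin{equation*}
\Hess_\Sigma f(X,Y) = \langle X, Y\rangle - \langle x, N\rangle\, A(X,Y),
\end{equation*}
where I have used that $\langle \nabla^\Sigma f, N\rangle = 0$ and that $\langle x, N\rangle$ is the support function multiplying the shape operator. This identifies $\Hess_\Sigma f = \mathrm{Id} - \langle x,N\rangle\, A$ as an endomorphism of $T\Sigma$.

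The final step is purely algebraic: I diagonalize $\mathrm{Id} - \langle x,N\rangle\, A$. Since $\Sigma$ is minimal, the two principal curvatures $\kappa_1, \kappa_2$ satisfy $\kappa_1 + \kappa_2 = 0$, so $\kappa_2 = -\kappa_1$ and $|A|^2 = \kappa_1^2 + \kappa_2^2 = 2\kappa_1^2$, giving $|\kappa_1| = |A|/\sqrt{2}$. The eigenvalues of $\Hess_\Sigma f$ are therefore $1 - \langle x,N\rangle\,\kappa_i$, which take the two values
\begin{equation*}
1 - \frac{|A|}{\sqrt{2}}\langle x, N\rangle \quad\text{and}\quad 1 + \frac{|A|}{\sqrt{2}}\langle x, N\rangle,
\end{equation*}
exactly as stated. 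The main obstacle I anticipate is bookkeeping in the Hessian computation of part $ii)$: one must track the normal and tangential projections consistently and be careful that the position vector's ambient derivative contributes both the metric term and a correction through $A$, rather than making a sign error when passing from the shape operator to the support function. Everything else, including the use of minimality to express $\kappa_i$ in terms of $|A|$, is routine.
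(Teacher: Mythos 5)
Your proposal follows the same route as the paper's own proof, and part ii) is correct as written: the identity $\Hess_\Sigma f = \mathrm{Id} - \langle x,N\rangle A$ followed by diagonalization using minimality is exactly the paper's computation, and the sign ambiguity you worry about is immaterial, since minimality forces the principal curvatures to be $\pm |A|/\sqrt{2}$, so the pair of eigenvalues $1 \mp \tfrac{|A|}{\sqrt{2}}\langle x,N\rangle$ comes out the same under either convention for $A$.

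Part i), however, is self-contradictory as written, and you should fix it. You assert that along $\partial\Sigma$ the position vector $x$ is ``parallel to $N$'' and then conclude $\langle x,N\rangle = 0$; these two statements are incompatible, since $x$ parallel to the unit normal $N$ with $|x|=1$ would give $\langle x,N\rangle = \pm 1$, hence $\nabla^\Sigma f = x^\top = 0$ on $\partial\Sigma$ --- the opposite of the claim (and geometrically it would mean $\Sigma$ meets the sphere \emph{tangentially}, not orthogonally). The correct reading of the free boundary condition is the reverse: $x$ is the outward unit normal of $\partial B$ at $x\in\partial\Sigma$, and orthogonal intersection means precisely that this vector lies in the tangent plane $T_x\Sigma$ (it equals the outward conormal of $\partial\Sigma$ in $\Sigma$); hence $x \perp N$, i.e.\ $\langle x,N\rangle = 0$, so $x^\top = x$ on $\partial\Sigma$. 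This is the paper's one-line argument: since $\Sigma$ meets $\partial B$ orthogonally along $\partial\Sigma$, one has $x\in T_x\Sigma$ for all $x\in\partial\Sigma$. With that sentence corrected, your proof is complete and coincides with the paper's.
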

\begin{proof}
In order to prove $i)$, we first note that $\nabla^\Sigma f(x)=x^T$ for all $x\in\Sigma$, where $(\cdot)^T$ stands for the orthogonal projection onto $T_x\Sigma$. Since $\Sigma$ meets $\partial B$ orthogonally along $\partial\Sigma$, $x\in T_x\Sigma$ for all $x\in\partial\Sigma$ and the assertion follows. \\
\indent Given $x\in\Sigma$, let $N$ be a local unit normal vector field in a neighbourhood of $x$. For all $X,Y\in T_x\Sigma$, we have
\begin{align*}
\Hess_\Sigma f(x)(X,Y)&=XY(f)-(\nabla_XY)(f)\\
&=X\langle x,Y\rangle-\langle x,\nabla_XY\rangle\\
&=\langle X,Y \rangle+\langle x, D_XY\rangle-\langle x,\nabla_XY\rangle\\
&=\langle X,Y\rangle-\langle A(X),Y\rangle\langle x,N(x)\rangle\\
&=\langle X-A(X)\langle x, N(x)\rangle,  Y\rangle.
\end{align*} 
\indent Therefore the eigenvalues of $\Hess_\Sigma f(x)$ are given by 
\begin{equation*}
 1-k_1\langle x,N(x)\rangle \quad \text{and} \quad 1-k_2\langle x,N(x)\rangle,
\end{equation*}
\noindent where $k_1\leqslant k_2$ are the principal curvatures of $\Sigma$ at $x$. Since $\Sigma$ is minimal, $k_1=-|A|(x)/\sqrt{2}$ and $k_2=|A|(x)/\sqrt{2}$. This finishes the proof of $ii)$.
\end{proof}

\indent Under the assumptions of our theorem, we then conclude that the function $f$ defined in Lemma  \ref{distancefunction} must be convex. This imposes very strong restrictions on the geometry of the surface $\Sigma$.

\begin{proposition}\label{structureofC}
Let $\Sigma^2$ be a compact free boundary minimal surface in $B^3$. Define
\begin{equation*}
 \mathcal{C} = \{p\in \Sigma;\, |p|=\min_{x\in \Sigma} |x|\}.
\end{equation*} 
\indent If $|A|^2\langle x,N\rangle^2\leq 2$ on $\Sigma$, then
\begin{itemize}
\item[$i)$] either $\mathcal{C}$ contains a single point $p\in\Sigma\setminus\partial\Sigma$, in which case $\Sigma$ must be a flat equatorial disk;
\item[$ii)$] or $\mathcal{C}$ is a simple closed geodesic in $\Sigma\setminus\partial\Sigma$ and $\Sigma$ is homeomorphic to an annulus.
\end{itemize}
\end{proposition}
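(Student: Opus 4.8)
The plan is to exploit the convexity of $f$. By Lemma~\ref{distancefunction}$(ii)$ and the hypothesis $|A|^2\langle x,N\rangle^2\leq 2$, both eigenvalues $1\pm\frac{|A|}{\sqrt2}\langle x,N\rangle$ of $\Hess_\Sigma f$ are nonnegative, so $f$ is convex. First I would locate the minimum set $\mathcal{C}$. Along $\partial\Sigma$ the free boundary condition makes the outward conormal $\nu$ equal to the position vector $x$, so $\langle\nabla^\Sigma f,\nu\rangle=|x|^2=1>0$ there; hence $f$ strictly increases towards the boundary and $\mathcal{C}\subset\Sigma\setminus\partial\Sigma$. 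Every $p\in\mathcal{C}$ is then an interior minimum, hence a critical point, so $\nabla^\Sigma f(p)=p^{T}=0$; that is, the position vector is normal, $p=\langle p,N\rangle N$, and $\langle p,N(p)\rangle^2=|p|^2=r_0^2$ with $r_0=\min_{\Sigma}|x|$.

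Next I would read off the local structure from the eigenvalues, which on $\mathcal{C}$ are $1\pm\frac{|A|r_0}{\sqrt2}$ and sum to $2$. In particular they cannot both vanish, so $f$ is constant on no open set and $\mathcal{C}$ has empty interior. If $r_0=0$, then at the interior closest point $\Hess_\Sigma f$ equals the identity, the minimum is strict, and by convexity $\mathcal{C}=\{0\}$. If instead $\mathcal{C}$ contains two points, then $f$ is constant along any minimizing geodesic joining them, which forces the smaller eigenvalue to vanish; thus $|A|^2\langle x,N\rangle^2=2$ on $\mathcal{C}$, the Hessian has one-dimensional kernel (eigenvalues $0$ and $2$) there, and $\mathcal{C}$ is totally convex. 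A branch point or a two-dimensional piece is then excluded, the first by total convexity and the second by the eigenvalue sum being $2$, so $\mathcal{C}$ is a one-dimensional totally geodesic submanifold.

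I expect the main obstacle to be proving that this one-dimensional $\mathcal{C}$ is a \emph{smooth simple closed} geodesic, i.e.\ ruling out endpoints. Here I would use that a minimal surface is real-analytic, so $f$ is real-analytic on $\Sigma$; since $f$ is constant on an open subarc of a geodesic contained in $\mathcal{C}$, it is constant along the entire maximal geodesic. Because $f\equiv r_0^2/2<1/2$ on this geodesic while $f=1/2$ on $\partial\Sigma$, the geodesic never approaches the boundary, remains in the compact interior, and is therefore complete; together with total convexity this forces it to close up into the simple closed geodesic $\mathcal{C}$. As $\Sigma$ is two-sided, the normal bundle of $\mathcal{C}$ is trivial.

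Finally I would deduce the topology from the negative gradient flow of $f$. On $\partial\Sigma$ the vector $-\nabla^\Sigma f$ points inward, so the flow preserves $\Sigma$, and since $f$ is convex with critical set exactly $\mathcal{C}$, it deformation retracts $\Sigma$ onto $\mathcal{C}$. When $\mathcal{C}=\{p\}$ this makes $\Sigma$ contractible, hence a disk, and Nitsche's theorem \cite{Nit} identifies it with a flat equatorial disk; when $\mathcal{C}$ is a simple closed geodesic, $\Sigma$ is homotopy equivalent to a circle and, being compact, orientable and with boundary, is homeomorphic to an annulus.
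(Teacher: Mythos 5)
Your proposal contains good ideas, but it rests on an unjustified foundation that the paper is careful to establish: the existence of geodesics. You repeatedly invoke minimizing geodesics joining points of $\mathcal{C}$ --- in the step ``$f$ is constant along any minimizing geodesic joining them,'' in deducing $\mathcal{C}=\{0\}$ when $r_0=0$, in the claim that the complete geodesic ``closes up'' into $\mathcal{C}$, and implicitly when you identify the critical set of the convex function $f$ with its minimum set $\mathcal{C}$ (a critical point of a convex function is a global minimum only if it can be joined to arbitrary points by geodesics), which your gradient-flow retraction needs. But $\Sigma$ is a compact surface \emph{with boundary}, and two interior points of such a surface need not be joined by any geodesic: a length-minimizing path may touch $\partial\Sigma$, where it fails to solve the geodesic equation. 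The missing geometric input --- and the first thing the paper's proof establishes after the convexity of $f$ --- is that the free boundary condition forces $\partial\Sigma$ to be strictly convex in $\Sigma$ (geodesic curvature $k_g\equiv 1$ with respect to the outward conormal). This is what guarantees that minimizing geodesics between interior points exist and stay in the interior, makes $\mathcal{C}$ connected, and renders ``total convexity'' a non-vacuous notion. Without this observation every one of the steps above is unsupported.

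There are also two places where you assert rather than prove. First, a complete geodesic trapped in a compact region need \emph{not} close up; the correct argument (this is how the paper handles corners and simplicity) is that if your geodesic failed to close up, or self-intersected transversally, or if some point of $\mathcal{C}$ lay off it, then joining nearby points of $\mathcal{C}$ by minimizing geodesics would sweep out an open subset of $\mathcal{C}$, contradicting the empty-interior property you correctly derived from the fact that the trace of $\Hess_\Sigma f$ equals $2$. You possess exactly these ingredients but never assemble them, and in particular you never show that $\mathcal{C}$ coincides with the closed geodesic. Second, the deformation retraction along $-\nabla^\Sigma f$ requires convergence of flow lines and continuity of the limiting map; this can be justified here (your eigenvalue computation shows $\mathcal{C}$ is a nondegenerate critical manifold in the Morse--Bott sense, and $f$ is real-analytic), but it is not automatic. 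I note that where your argument is complete it is genuinely different from the paper's: the paper obtains the annulus conclusion by producing geodesic loops in nontrivial homotopy classes (again using boundary convexity) and showing any such loop lies in $\mathcal{C}$, concluding $\pi_1(\Sigma)=\mathbb{Z}$, whereas you propose analytic continuation plus a Morse--Bott retraction. That route can be made to work, but only after the boundary-convexity gap is filled.
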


\begin{proof}
\indent Let $f:\Sigma\to\mathbb{R}$ be defined as in Lemma \ref{distancefunction}. We claim that $\mathcal{C}$ is a totally convex subset of $\Sigma$. Recall that a subset $A$ of a Riemannian manifold $(M^n,g)$ is totally convex when any geodesic in $M^n$ joining two points in $A$ actually lies in $A$ (see, for example, \cite{CheGro}). In fact, given $p,q\in\mathcal{C}$, let $\gamma:[0,1]\to\Sigma$ be a geodesic such that $\gamma(0)=p$ and $\gamma(1)=q$. By item (2) in Lemma \ref{distancefunction}, the geometric condition $|A|^2\langle x,N\rangle^2\leq 2$ on $\Sigma$ is equivalent to $\Hess_\Sigma f\geq 0$ on $\Sigma$. Hence, $(f\circ \gamma)^{\prime\prime}(t)\geq 0$ for all $t\in[0,1]$, that is, $f\circ \gamma$ is convex on $[0,1]$. By the definition of $\mathcal{C}$, as $f\circ \gamma$ attains its minimum at $t=0$ and $t=1$ we conclude that $(f\circ \gamma)(t)\equiv \min_\Sigma f$. Therefore $\gamma([0,1])\subset\mathcal{C}$ and the claim follows. \\ 
\indent Since $\Sigma$ meets $\partial B$ orthogonally, the geodesic curvature $k_g$ of $\partial \Sigma$ in $\Sigma$, computed with respect to the unit outward pointing conormal, satisfies $k_g\equiv 1$. In particular, $\partial\Sigma$ is strictly convex in $\Sigma$. This implies that for all $p,q\in\Sigma$ there exists a minimising geodesic in $\Sigma$ joining $p$ to $q$. Thus, the totally convex set $\mathcal{C}$ is connected. Also, $\mathcal{C}$ is contained in the interior of $\Sigma$, because the gradient of $f$ is non-zero on $\partial \Sigma$ and points outwards, by Lemma \ref{distancefunction}, item $i)$. \\ 
\indent Now, let us suppose that $\mathcal{C}$ contains only one point $p\in\Sigma\setminus \partial \Sigma$. Given $[\alpha]\in\pi_1(\Sigma,p)$, let us assume that $[\alpha]$ is a nontrivial homotopy class. Since $\partial\Sigma$ is strictly convex, we can find a geodesic loop $\gamma:[0,1]\to\Sigma$, $\gamma(0)=\gamma(1)=p$, such that $\gamma\in[\alpha]$. Since $\mathcal{C}$ is totally convex, $\gamma([0,1])\subset\mathcal{C}$. But $\mathcal{C}=\{p\}$ and $[\alpha]$ is non-trivial, which is a contradiction. Therefore $\Sigma$ is homeomorphic to a disk. By Nitsche's Theorem \cite{Nit}, $\Sigma$ is a flat equatorial disk. \\
\indent Finally, suppose that $\mathcal{C}$  does not consist of a single point. By Nitsche's result, $\Sigma$ cannot be homeomorphic to a disk. Let $p\in\mathcal{C}$ and let $[\alpha]\in\pi_1(\Sigma,p)$ be a non-trivial homotopy class. As above, since $\partial\Sigma$ is strictly convex, we can find a geodesic loop $\gamma:[0,1]\to\Sigma$, $\gamma(0)=\gamma(1)$, such that $\gamma\in[\alpha]$. We claim that $\gamma^\prime(0)=\gamma^\prime(1)$, $\gamma([0,1])$ is a simple curve and $\mathcal{C}=\gamma([0,1])$. In fact, if $\gamma^\prime(0)\neq\gamma^\prime(1)$, then, because $\mathcal{C}$ is totally convex, it is possible to join points on $\gamma$ near the break at $p$ by minimising geodesics and find an open set $U\subset\mathcal{C}$, which is a contradiction as $\Sigma$ is minimal. By similar reasoning, $\gamma$ has to be simple and $\mathcal{C}=\gamma([0,1])$, so that $\mathcal{C}$ is a simple closed geodesic. Since the above argument also shows that any geodesic loop based at $p$ must be contained in $\mathcal{C}$, we conclude that $\pi_1(\Sigma,p)=\mathbb{Z}$, that is, $\Sigma$ is homeomorphic to an annulus.
\end{proof}

\indent The following corollary is a direct consequence of the above proof.
\begin{corollary}\label{corollary1}
Let $\Sigma^2$ be a compact free boudary minimal surface in $B^3$. If $|A|^2\langle x,N\rangle^2<2$ on $\Sigma$, then $\Sigma$ is a flat equatorial disk.
\end{corollary}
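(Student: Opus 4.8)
The plan is to upgrade the convexity of $f$ exploited in Proposition \ref{structureofC} to \emph{strict} convexity, and then observe that strict convexity is incompatible with case $(ii)$ of that proposition. Concretely, let $f(x)=|x|^2/2$ as before. By item $(ii)$ of Lemma \ref{distancefunction}, the two eigenvalues of $\Hess_\Sigma f(x)$ are $1\pm\frac{|A|(x)}{\sqrt 2}\langle x,N(x)\rangle$, so the strict pinching hypothesis $|A|^2\langle x,N\rangle^2<2$ is exactly equivalent to $\frac{|A|(x)}{\sqrt 2}|\langle x,N(x)\rangle|<1$ at every $x\in\Sigma$, which forces both eigenvalues to be strictly positive. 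Hence $\Hess_\Sigma f>0$ on all of $\Sigma$; that is, $f$ is strictly convex along every unit-speed geodesic, since for such a geodesic $\gamma$ one has $(f\circ\gamma)''(t)=\Hess_\Sigma f(\gamma'(t),\gamma'(t))>0$.

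Next I would argue that the minimum set $\mathcal{C}=\{p\in\Sigma:\,|p|=\min_{x\in\Sigma}|x|\}$ must consist of a single point. The strict inequality certainly implies the non-strict one, so Proposition \ref{structureofC} applies and $\mathcal{C}$ is either a single interior point or a simple closed geodesic. To rule out the latter, suppose $\mathcal{C}$ contained two distinct points $p\neq q$. As in the proof of the proposition, the boundary $\partial\Sigma$ has geodesic curvature $k_g\equiv 1$ and is therefore strictly convex, so there exists a minimising geodesic $\gamma:[0,1]\to\Sigma$ with $\gamma(0)=p$ and $\gamma(1)=q$. Then $f\circ\gamma$ is strictly convex and attains the global minimum $\min_\Sigma f$ at both endpoints $t=0$ and $t=1$; but a strictly convex function of one variable satisfies $(f\circ\gamma)(t)<(1-t)(f\circ\gamma)(0)+t(f\circ\gamma)(1)=\min_\Sigma f$ for $t\in(0,1)$, contradicting the minimality of $\min_\Sigma f$. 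In particular the closed geodesic of case $(ii)$, along which $f$ is constant, cannot occur.

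Therefore $\mathcal{C}$ reduces to a single point $p\in\Sigma\setminus\partial\Sigma$, and case $(i)$ of Proposition \ref{structureofC} shows directly that $\Sigma$ is a flat equatorial disk. The argument is a routine strengthening of the preceding proof, so I do not anticipate a serious obstacle; the only point requiring care is the exclusion of case $(ii)$, which is handled by the strict-convexity contradiction above. As a consistency check, the flat equatorial disk has $|A|^2\equiv 0$, so $|A|^2\langle x,N\rangle^2\equiv 0<2$, whereas the critical catenoid attains $|A|^2\langle x,N\rangle^2=2$ on its central circle and is thus correctly excluded by the strict hypothesis.
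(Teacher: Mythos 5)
Your proof is correct and follows essentially the same route as the paper: both deduce $\Hess_\Sigma f>0$ from the strict pinching via Lemma \ref{distancefunction}, conclude that the minimum set of $f$ is a single interior point, and then invoke case $(i)$ of Proposition \ref{structureofC}. The only difference is that you spell out the uniqueness of the minimum point (via a minimising geodesic and strict convexity of $f\circ\gamma$), a step the paper asserts more briefly.
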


\begin{proof}
Since $|A|^2\langle x,N\rangle^2<2$, by item $ii)$ of Lemma \ref{distancefunction} it follows that the function $f$ defined in Proposition \eqref{distancefunction} is strictly convex, that is, $\Hess_\Sigma f>0$ on $\Sigma$. By item $i)$ of the same Lemma, $f$ attains its mininum value on the interior of $\Sigma$. By the strict convexity of $f$, the set of minima contains a single point. Thus, Proposition \ref{structureofC} implies that $\Sigma$ is a flat equatorial disk.
\end{proof}
\indent In order the finish the proof of our main result, it remains only to analyse the situation where the function $|A|^2\langle x,N\rangle^2$ attains the maximum value $2$ at some point on $\Sigma$.

\begin{proposition} Let $\Sigma^2$ be a compact free boundary minimal surface in $B^3$. If $|A|^2\langle x,N\rangle^2\leq 2$ on $\Sigma$ and $|A|^2(p)\langle p,N(p)\rangle^2=2$ at some point $p\in\Sigma$, then $\Sigma$ is a critical catenoid.
\end{proposition}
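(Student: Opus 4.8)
The plan is to reduce to the rotationally symmetric case and then invoke the classification recalled in Section 2. First, since $|A|^2(p)\langle p,N(p)\rangle^2=2\neq 0$, the surface is not totally geodesic, so it is not a disk; by Proposition \ref{structureofC} it is homeomorphic to an annulus and $\mathcal{C}$ is a simple closed geodesic contained in the interior of $\Sigma$. Set $r_0=\min_{\Sigma}|x|$. Along $\mathcal{C}$ the function $f$ is constant (equal to its minimum) and $\mathcal{C}$ is a geodesic, so $\Hess_\Sigma f(\gamma',\gamma')=(f\circ\gamma)''=0$; by Lemma \ref{distancefunction} this forces $|A|^2\langle x,N\rangle^2=2$ at every point of $\mathcal{C}$. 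Moreover $\nabla^\Sigma f=x^T=0$ on $\mathcal{C}$, so the position vector is normal there and $N=\pm x/r_0$.

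Next I would pin down the geometry of $\mathcal{C}$. Since $\mathcal{C}$ is a geodesic of $\Sigma$, its acceleration in $\mathbb{R}^3$ is normal to $\Sigma$, hence radial because $N=\pm x/r_0$; as $\mathcal{C}$ also lies on the sphere of radius $r_0$ about the origin, one gets $\gamma''=-x/r_0^2$, which is exactly the equation of a great circle. Thus $\mathcal{C}$ is a round circle of radius $r_0$ lying in a plane $\Pi$ through the origin; let $e_3$ be a unit vector normal to $\Pi$. Along $\mathcal{C}$ this gives $T\Sigma=\mathrm{span}(\gamma',e_3)$, and since the normal curvature of $\Sigma$ in the direction $\gamma'$ is $\langle\gamma'',N\rangle=\mp 1/r_0$, which (using $|A|^2=2/r_0^2$ on $\mathcal{C}$) is a principal curvature, the orthonormal frame $\{\gamma',e_3\}$ diagonalises the second fundamental form, so $e_3$ is the other principal direction and $A(e_3)\in\mathbb{R}e_3$.

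The heart of the argument is to show that $\Sigma$ is invariant under rotations about the axis $\mathbb{R}e_3$. Consider the normal component $\psi=\langle e_3\times x,N\rangle$ of the rotational Killing field $e_3\times x$ of $\mathbb{R}^3$. Being the normal part of an ambient Killing field, $\psi$ is a Jacobi field, $\Delta_\Sigma\psi+|A|^2\psi=0$. On $\mathcal{C}$ we have $\psi=\pm r_0^{-1}\langle e_3\times x,x\rangle=0$; and, taking the unit $\Sigma$-normal $\nu=e_3$ to $\mathcal{C}$, a direct computation using $D_\nu x=\nu$, $D_\nu N=-A(\nu)$, $e_3\times e_3=0$ and $A(\nu)\in\mathbb{R}e_3$ shows that $\partial_\nu\psi=0$ on $\mathcal{C}$ as well. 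Hence $\psi$ and its gradient vanish along $\mathcal{C}$, and unique continuation for the elliptic operator $\Delta_\Sigma+|A|^2$ yields $\psi\equiv 0$ on the connected surface $\Sigma$. Therefore $e_3\times x$ is everywhere tangent to $\Sigma$, that is, $\Sigma$ is a surface of revolution about $\mathbb{R}e_3$.

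Finally, a rotationally symmetric minimal surface in $\mathbb{R}^3$ is an open piece of a plane or of a catenoid; since $\Sigma$ is an annulus meeting $\partial B$ orthogonally, the discussion of Section 2 identifies it with the critical catenoid, completing the proof. I expect the main obstacle to be the symmetry step: choosing the correct Jacobi field $\psi$ and verifying that its full Cauchy data vanish along $\mathcal{C}$, which is precisely what unlocks unique continuation. The key geometric input enabling this is the observation that $\mathcal{C}$ must be a great circle, which both produces a well-defined axis $e_3$ and makes the boundary computation of $\partial_\nu\psi$ collapse to zero.
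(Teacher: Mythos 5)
Your proposal is correct, and its overall architecture is the same as the paper's: reduce to the annulus case via Proposition \ref{structureofC}, show the minimum set $\mathcal{C}$ is a great circle, introduce the rotational Jacobi function $\psi=\langle e_3\times x,N\rangle$ (which is exactly the paper's $u=\langle x\wedge N,e_3\rangle$, by the triple-product identity), show both $\psi$ and its normal derivative vanish along $\mathcal{C}$, and conclude rotational symmetry. The differences are in two local steps, and both are legitimate. First, you establish that $\mathcal{C}$ is a great circle and that $\gamma'$, $e_3$ are principal directions by direct computation ($\gamma''=-\gamma/r_0^2$ from the geodesic equation plus $N=\pm x/r_0$, and the normal curvature $\mp 1/r_0$ attaining the extreme value $\pm|A|/\sqrt{2}$); the paper instead observes that $\Sigma$ is tangent to the sphere $S_R$ along $\gamma$, so a geodesic of $\Sigma$ is a geodesic of $S_R$, and it merely asserts the principal-direction claim --- your argument actually fills in that detail. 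Second, to pass from vanishing Cauchy data on $\mathcal{C}$ to $\psi\equiv 0$, you invoke unique continuation for $\Delta_\Sigma+|A|^2$ (valid: the Cauchy uniqueness theorem for second-order elliptic equations applies, and here everything is even real-analytic), whereas the paper uses Cheng's structure theorem for nodal sets on surfaces, under which critical points of a nonzero Jacobi function lying in its nodal set must be isolated, contradicting the existence of the critical circle $\mathcal{C}$. Cheng's theorem is a two-dimensional nodal-set result, while unique continuation is the general elliptic fact; in this setting they deliver the same conclusion, so your route is a genuine but essentially equivalent alternative for that one step.
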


\begin{proof}
Let $f:\Sigma\to\mathbb{R}$ and $\mathcal{C}$ be defined as in Lemma \ref{distancefunction} and Proposition \ref{structureofC}, respectively. Since $|A|^2(p)\langle p,N(p)\rangle^2=2$ at some point $p\in\Sigma$, the surface $\Sigma$ cannot be homeomorphic to a disk, for in this case it would be totally geodesic as a consequence of Nitsche's Theorem \cite{Nit}. Thus, by Proposition \ref{structureofC}, $\Sigma$ is homeomorphic to an annulus and $\mathcal{C}$ is a simple closed geodesic, say $\gamma:[0,\ell]\to\Sigma$, parametrized by arc length, in $\Sigma\setminus\partial\Sigma$. In particular, $\inf_\Sigma f>0$. \\
\indent 
Let $R>0$ be given by $R^2/2=\inf_\Sigma f$ and let $S_R\subset\mathbb{R}^3$ be the sphere of radius $R$ centred at the origin. We claim that $\gamma$ is a great circle in $S_R$. In fact, by definition of $R$, we have that $\Sigma\subset\{x\in\mathbb{R}^3:|x|\geqslant R\}$ and $\Sigma\cap S_R=\gamma([0,\ell])$. This implies that $T_{\gamma(s)}\Sigma=T_{\gamma(s)}S_R$ for all $s\in[0,\ell]$. Since $\gamma$ is a geodesic of $\Sigma$, we then conclude that $\gamma$ is also a geodesic of $S_R$, that is, $\gamma$ is a great circle of $S_R$. \\
\indent If $e_3$ denote a unit vector in $\mathbb{R}^3$ that is orthogonal to the plane containing the great circle $\gamma$, then $\{\gamma'(s),e_3\}$ is an orthonormal basis of $T_{\gamma(s)}\Sigma$ for all $s\in[0,\ell]$. \\
\indent Let $u(x)=\langle x\wedge N(x),e_3\rangle$, $x\in\Sigma$, where $\wedge$ denotes the cross product in $\mathbb{R}^3$. The function $u$ is the infinitesimal normal speed of the variation of $\Sigma$ by rotations around the $e_3-$axis. As such, $u$ is a Jacobi function of $\Sigma$, that is, 
\begin{equation*}
\Delta_\Sigma u+|A|^2u=0\ \ \mbox{on} \ \ \Sigma, \ \ \mbox{and} \ \ \frac{\partial u}{\partial \nu}=u \ \ \mbox{on} \ \ \partial\Sigma.
\end{equation*}
\noindent Moreover, $u$ vanishes identically if and only if $\Sigma$ is a surface of revolution around the $e_3-$axis. \\
\indent Clearly, $\gamma([0,\ell])$ is contained in $u^{-1}(0)$, which immediately implies that $d u(\gamma(s))\gamma^\prime(s)=0$ for all $s\in[0,\ell]$. On the other hand, as $\gamma^\prime(s)$ is a principal direction of $\Sigma$ at $\gamma(s)$, so it is the orthogonal direction $e_3\in T_{\gamma(s)}\Sigma$. Thus, $dN_{\gamma(s)}e_3$ is parallel to $e_3$ and
\begin{equation*}
du(\gamma(s))e_3=\langle e_3\wedge N(\gamma(s)), e_3\rangle + \langle \gamma(s)\wedge dN_{\gamma(s)}e_3,e_3\rangle=0.
\end{equation*}
\indent The above argument has shown that every point in the circle $\mathcal{C}$, which is contained in $u^{-1}(0)$, is a critical point of $u$. But this implies that $u$ vanishes identically on $\Sigma$. In fact, should this not be true, the nodal set $u^{-1}(0)$ of the Jacobi function $u$ would contain only isolated critical points, by a result due to S. Y. Cheng (\cite{Che}, Theorem 2.5). And this would be a contradiction. \\
\indent Thus, $\Sigma$ is an free boundary minimal annulus of revolution around the $e_3-$axis in $B^3$. In other words, $\Sigma$ is a critical catenoid.
\end{proof}

\noindent \textbf{Acknowledgements:} L. A. is supported by the ERC Start Grant PSC and LMCF 278940 and I. N. is supported by CNPq-Brazil. Both authors are grateful to Andr\'e Neves, Celso Viana, Rafael Montezuma Fernando Cod\'a Marques, Ben Sharp and Alessandro Carlotto for their kind interest in this work. \\

\end{document}